\newtheorem{theorem}{Theorem}[section]
\newtheorem{lemma}[theorem]{Lemma}
\newtheorem{proposition}[theorem]{Proposition}
\newtheorem{corollary}[theorem]{Corollary}
\theoremstyle{definition}
\newtheorem{definition}[theorem]{Definition}
\newtheorem{example}[theorem]{Example}
\newtheorem{remark}[theorem]{Remark}
\begin{document}

\title[Doubly indexed flag variety]
{
Doubly indexed flag variety and fixed point set of a partial flag variety}

\author{Lucas Fresse}
\thanks{Work supported in part by Minerva grant, No. 8596/1.}
\address{Department of Mathematics, the Weizmann Institute of Science, 76100 Rehovot, Israel}
\email{lucas.fresse@weizmann.ac.il}

\begin{abstract}
We define a variety of doubly indexed flags, this is a smooth, projective variety,
and we describe it as an iterated over Grassmannian varieties.
On the other hand, we consider the variety of partial flags which are stabilized
by a given nilpotent endomorphism.
We partition this variety into locally closed subvarieties
which are vector bundles over varieties of the aforedmentioned type.
\end{abstract}

\keywords{Flag manifolds; partial flags; parabolic orbits; iterated bundles; vector bundles; Springer fibers}
\subjclass[2010]{14M15; 14L30; 17B08}

\maketitle

\section{Introduction}

All along this article, we fix an algebraically closed field $\mathbb{K}$ of characteristic zero,
and this is implicitely the underlying field of all the constructions.
Let $A$ be a finite dimensional $\mathbb{K}$-vector space and let $x\in\mathrm{End}(A)$ be nilpotent.
In this article, 
we consider the variety ${\mathcal A}$ of partial flags
$(V_1\subset\ldots\subset V_n\subset A)$ of a given dimension vector
$(k_1\leq \ldots\leq k_n)$, and we study 
the subvariety ${\mathcal A}_x$ formed by $x$-stable partial flags,
i.e. such that $x(V_i)\subset V_i$ for all $i$.
In the particular case where ${\mathcal A}$ is a variety of complete flags,
${\mathcal A}_x$ is a Springer fiber.

In general, the variety ${\mathcal A}_x$ is a connected, algebraic projective variety, 
but outside of trivial cases it is reducible and singular (cf. \cite{Spaltenstein-1982}, \cite{Steinberg-1976}).

In \cite{Spaltenstein-1976}, N. Spaltenstein constructed a partition of ${\mathcal A}_x$
into locally closed, smooth, irreducible subvarieties.
Moreover, it is known (cf. \cite{Shimomura}) that the variety ${\mathcal A}_x$ admits a
cell decomposition.

Also in this article, we study the problem of partitioning ${\mathcal A}_x$ into a finite number
of smooth, irreducible subvarieties. Let us outline our construction.
In section \ref{section-doubly-indexed-flags}, we define a variety
${\mathcal F}_{\kappa_{\bullet,\bullet}}(A_\bullet)$ of doubly indexed sequences of nested
subspaces $(V_{i,j})$, we show that it is projective and smooth, and in fact that it is
an iterated bundle of Grassmannian varieties (Proposition \ref{proposition-iterated-bundle}). 
The variety ${\mathcal F}_{\kappa_{\bullet,\bullet}}(A_\bullet)$ is also a natural desingularization
of a Schubert variety (Corollary \ref{corollary-resolution-singularity}).
In section \ref{section-partition}, we consider the action on the variety ${\mathcal A}$
of the parabolic group $P\subset GL(A)$ of elements which fix the kernels $\ker x^i$.
We partition ${\mathcal A}_x$ into its intersections with the $P$-orbits of ${\mathcal A}$.
We show that each intersection ${\mathcal A}_x\cap {\mathcal P}$ is a vector bundle over
a variety of the form ${\mathcal F}_{\kappa_{\bullet,\bullet}}(A_\bullet)$
(the main result of this paper, Theorem \ref{theorem1}).

Our construction can be related to
the construction by C. de Concini, G. Lusztig, C. Procesi
\cite{DeConcini-Lusztig-Procesi} of a partition of a general Springer fiber
into vector bundles over smooth, projective varieties,
also obtained by taking the
intersections with the orbits of a parabolic group attached to
the nilpotent element (cf. Remark \ref{remark2}).

\section{The variety ${\mathcal F}_{\kappa_{\bullet,\bullet}}(A_\bullet)$}

\label{section-doubly-indexed-flags}

Let $A$ be a vector space of finite dimension $d\geq 0$, and let $k\in\{0,1,\ldots,d\}$.
Let $Grass_k(A)$ be the variety of $k$-dimensional vector spaces $V\subset A$.
We will abbreviate $Grass_k(d)=Grass_k(\mathbb{K}^d)$.

More generally, for a uple
$k_\bullet=(k_1\leq k_2\leq\ldots\leq k_n)$ with $k_i\in\{0,1,\ldots,d\}$,
we denote by ${\mathcal F}_{k_\bullet}(A)$ the variety of partial flags
$(V_1\subset V_2\subset \ldots\subset V_n)$ with dimension vector $k_\bullet$,
that is $V_i\in Grass_{k_i}(A)$ for all $i$.

We introduce a variety of doubly indexed partial flags which generalizes ${\mathcal F}_{k_\bullet}(A)$:

\begin{definition}
\label{definition-1}
Fix a partial flag $A_\bullet=(A_1\subset\ldots\subset A_m)$
with dimension vector $d_\bullet=(d_1,\ldots,d_m)$, and fix a matrix
$\kappa_{\bullet,\bullet}=(\kappa_{i,j})_{1\leq i\leq m,\,1\leq j\leq n}$
where $\kappa_{i,j}\in\{0,1,\ldots,d_i\}$, and with $\kappa_{i,j}\leq \kappa_{i',j'}$
whenever $i\leq i'$ and $j\leq j'$.
We define ${\mathcal F}_{\kappa_{\bullet,\bullet}}(A_\bullet)$ as the set of doubly indexed
sequences $(V_{i,j})_{1\leq i\leq m,\,1\leq j\leq n}$ where
$V_{i,j}\in Grass_{\kappa_{i,j}}(A_i)$, and with $V_{i,j}\subset V_{i',j'}$ whenever $i\leq i'$ and $j\leq j'$.
\end{definition}

An element in ${\mathcal F}_{\kappa_{\bullet,\bullet}}(A_\bullet)$ is thus an arrangement of subspaces
\[
\begin{array}{cccccccccccccccccccccccccccccc}
V_{1,1} & \subset & V_{1,2} & \subset & \ldots & \subset & V_{1,n} & \subset & A_1 \\
\cap & & \cap & & & & \cap & & \cap \\
V_{2,1} & \subset & V_{2,2} & \subset & \ldots & \subset & V_{2,n} & \subset & A_2 \\
\cap & & \cap & & & & \cap & & \cap \\
\vdots & & \vdots & & & & \vdots & & \vdots \\
\cap & & \cap & & & & \cap & & \cap \\
V_{m,1} & \subset & V_{m,2} & \subset & \ldots & \subset & V_{m,n} & \subset & A_m
\end{array}
\]
The set ${\mathcal F}_{\kappa_{\bullet,\bullet}}(A_\bullet)$ is a closed subvariety of the projective variety
$\prod_{i,j}Grass_{\kappa_{i,j}}(A_i)$.
We will abbreviate ${\mathcal F}_{\kappa_{\bullet,\bullet}}(d_\bullet)={\mathcal F}_{\kappa_{\bullet,\bullet}}(\mathbb{K}^{d_1}\subset
\ldots\subset \mathbb{K}^{d_m})$.

\begin{example}
If $m=1$, then the sequence $A_\bullet=(A_1)$ is a single space, the matrix $\kappa_{\bullet,\bullet}=(\kappa_{1,1},\ldots,\kappa_{1,n})=:k_\bullet$
is a $n$-uple, and in these terms the variety ${\mathcal F}_{\kappa_{\bullet,\bullet}}(A_\bullet)$ coincides
with the partial flag variety ${\mathcal F}_{k_\bullet}(A_1)$.
\end{example}

\begin{example}
\label{example-2}
Suppose $n=1$. Thus $\kappa_{\bullet,\bullet}=(\kappa_{1,1},\ldots,\kappa_{m,1})=:(k_1,\ldots,k_m)$
is a $m$-uple. 
In this case,
the variety ${\mathcal F}_{\kappa_{\bullet,\bullet}}(A_\bullet)$ coincides with the Schubert variety
\[
{\mathcal X}_{k_\bullet}(A_\bullet):=
\{(V_1\subset\ldots\subset V_m)\in{\mathcal F}_{k_\bullet}(A_m):V_i\subset A_i\ \forall i\}.
\]
We will also abbreviate ${\mathcal X}_{k_\bullet}(d_\bullet)={\mathcal X}_{k_\bullet}(\mathbb{K}^{d_1}\subset
\ldots\subset \mathbb{K}^{d_m})$.
\end{example}

Our next purpose is to describe the structure of the variety ${\mathcal F}_{\kappa_{\bullet,\bullet}}(A_\bullet)$.
We recall the notion of iterated (fiber) bundle.
Let $X,B_1,\ldots,B_r$ be algebraic varieties.
For $r=1$, we say that $X$ is an iterated bundle of base $B_1$ if there is an isomorphism $X\stackrel{\sim}{\rightarrow} B_1$.
For $r\geq 2$, we say that $X$ is an iterated bundle of base $(B_1,\ldots,B_r)$ if there is a locally
trivial fiber bundle $X\rightarrow B_r$ whose typical fiber is an iterated bundle of base $(B_1,\ldots,B_{r-1})$.
An immediate example of iterated bundle is a product $X=B_1\times\ldots\times B_r$.
Another example: the partial flag variety ${\mathcal F}_{(k_1,\ldots,k_n)}(A)$ 
is an iterated bundle of base $\big(Grass_{k_1}(k_2),\ldots,Grass_{k_{n-1}}(k_n),Grass_{k_n}(A)\big)$.

In the setting of Example \ref{example-2},
the map ${\mathcal X}_{k_\bullet}(A_\bullet)\rightarrow Grass_{k_1}(A_1)$,
$(V_1,\ldots,V_m)\mapsto V_1$ is a locally trivial fiber bundle
of typical fiber ${\mathcal X}_{k_2-k_1,\ldots,k_m-k_1}(A_2/V_1,\ldots,A_m/V_1)$
hence, by induction,
we infer that ${\mathcal X}_{k_\bullet}(A_\bullet)$ is an iterated bundle of base
\[
\big(Grass_{k_m-k_{m-1}}(d_m-k_{m-1}),\ldots,Grass_{k_2-k_1}(d_2-k_1),Grass_{k_1}(d_1)\big).
\]
Thus, in the situation $n=1$, we get that
${\mathcal F}_{\kappa_{\bullet,\bullet}}(A_\bullet)$ is an iterated bundle of base
a sequence of Grassmannian varieties. 

Let us generalize this fact.
Let us come back to the general variety ${\mathcal F}_{\kappa_{\bullet,\bullet}}(A_\bullet)$,
with the same notation as in Definition \ref{definition-1}.
For $j\in \{1,\ldots,n\}$, we write $\kappa_{\bullet,j}=(\kappa_{1,j},\ldots,\kappa_{m,j})$.

\begin{proposition}
\label{proposition-iterated-bundle}
(a) The variety ${\mathcal F}_{\kappa_{\bullet,\bullet}}(A_\bullet)$
is an iterated bundle of base
\[
\big(
{\mathcal X}_{\kappa_{\bullet,1}}(\kappa_{\bullet,2}),
\ldots,{\mathcal X}_{\kappa_{\bullet,n-1}}(\kappa_{\bullet,n}),
{\mathcal X}_{\kappa_{\bullet,n}}(d_\bullet)
\big).
\]
(b) In particular, ${\mathcal F}_{\kappa_{\bullet,\bullet}}(A_\bullet)$
is an iterated bundle of base a sequence of Grassmannian varieties.
More precisely,
define $m\times n$-matrices $\delta_{\bullet,\bullet}$ and $\varepsilon_{\bullet,\bullet}$
by setting 
\[
\delta_{i,j}=\left\{
\begin{array}{ll}
\kappa_{i,j}-\kappa_{i-1,j} & \mbox{if $i\geq 2$,} \\
\kappa_{1,j} & \mbox{if $i=1$;} \\
\end{array}
\right.
\qquad
\varepsilon_{i,j}=\left\{
\begin{array}{ll}
\kappa_{i,j+1}-\kappa_{i,j} & \mbox{if $j\leq n-1$,} \\
d_i-\kappa_{i,n} & \mbox{if $j=n$;} \\
\end{array}
\right.
\]
and set ${\mathcal G}_{i,j}=Grass_{\delta_{i,j}}(\delta_{i,j}+\varepsilon_{i,j})$.
Then, ${\mathcal F}_{\kappa_{\bullet,\bullet}}(A_\bullet)$ is an iterated bundle
of base
\[\big(
{\mathcal G}_{m,1},\ldots,{\mathcal G}_{1,1},{\mathcal G}_{m,2},\ldots,{\mathcal G}_{1,2},
\ldots,{\mathcal G}_{m,n},\ldots,{\mathcal G}_{1,n}
\big).\]
(c) In particular, the variety ${\mathcal F}_{\kappa_{\bullet,\bullet}}(A_\bullet)$ is smooth.
\end{proposition}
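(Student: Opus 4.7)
The plan is to prove (a) by induction on the number $n$ of columns, with (b) and (c) then falling out quickly. The base case $n=1$ is exactly Example~\ref{example-2}: one has ${\mathcal F}_{\kappa_{\bullet,\bullet}}(A_\bullet)={\mathcal X}_{\kappa_{\bullet,1}}(d_\bullet)$, and the claim of (a) is immediate, since a one-base iterated bundle is by definition just an isomorphism. For the inductive step, I would consider the forgetful map
\[
\pi:{\mathcal F}_{\kappa_{\bullet,\bullet}}(A_\bullet)\rightarrow {\mathcal X}_{\kappa_{\bullet,n}}(d_\bullet),\qquad (V_{i,j})\mapsto (V_{1,n},\ldots,V_{m,n}),
\]
which lands in the Schubert variety because the last column of any doubly indexed flag automatically satisfies $V_{i,n}\subset V_{i+1,n}$ and $V_{i,n}\subset A_i$. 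Over a point $W_\bullet=(W_1\subset\ldots\subset W_m)\in {\mathcal X}_{\kappa_{\bullet,n}}(d_\bullet)$, the fiber is the variety of doubly indexed sequences $(V_{i,j})_{j\leq n-1}$ satisfying all the nestings together with $V_{i,n-1}\subset W_i$, i.e.\ the variety ${\mathcal F}_{\kappa'_{\bullet,\bullet}}(W_\bullet)$, where $\kappa'$ consists of the first $n-1$ columns of $\kappa_{\bullet,\bullet}$. By the induction hypothesis (applied to $W_\bullet$, whose dimension vector is $\kappa_{\bullet,n}$), this fiber is an iterated bundle of base $({\mathcal X}_{\kappa_{\bullet,1}}(\kappa_{\bullet,2}),\ldots,{\mathcal X}_{\kappa_{\bullet,n-1}}(\kappa_{\bullet,n}))$, which is precisely the list claimed in (a) once one prepends ${\mathcal X}_{\kappa_{\bullet,n}}(d_\bullet)$.

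The main obstacle is the local triviality of $\pi$: the Schubert variety ${\mathcal X}_{\kappa_{\bullet,n}}(d_\bullet)$ is not homogeneous, so one cannot invoke a transitive group action. My approach is to exhibit $\pi$ as an iterated relative Grassmannian bundle. Over ${\mathcal X}_{\kappa_{\bullet,n}}(d_\bullet)$ one has the tautological flag of vector subbundles $\mathcal{W}_1\subset\ldots\subset\mathcal{W}_m$, of ranks $\kappa_{i,n}$, with fiber $W_i$ over $W_\bullet$; the entries of the column $j=n-1$ are then a point of the relative Schubert variety attached to $\mathcal{W}_\bullet$ and the dimension vector $\kappa_{\bullet,n-1}$, and peeling off the columns $n-2,n-3,\ldots,1$ in turn presents $\pi$ as a tower of relative Grassmannian bundles. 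Since $\mathrm{Grass}(r,\mathcal{E})\to B$ is Zariski locally trivial whenever $\mathcal{E}\to B$ is a vector bundle (trivialize $\mathcal{E}$ on a small open), the whole tower, hence $\pi$, is locally trivial.

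Part (b) is then a substitution: I would plug into (a) the iterated-bundle decomposition of each Schubert variety recalled just before the proposition, namely that ${\mathcal X}_{k_\bullet}(A_\bullet)$ is an iterated bundle of base $(\mathrm{Grass}_{k_m-k_{m-1}}(d_m-k_{m-1}),\ldots,\mathrm{Grass}_{k_1}(d_1))$. Applied to ${\mathcal X}_{\kappa_{\bullet,j}}(\kappa_{\bullet,j+1})$ (for $j<n$) and to ${\mathcal X}_{\kappa_{\bullet,n}}(d_\bullet)$, a direct arithmetic check identifies each Grassmannian appearing in the base with ${\mathcal G}_{i,j}=\mathrm{Grass}_{\delta_{i,j}}(\delta_{i,j}+\varepsilon_{i,j})$, in the order $i=m,m-1,\ldots,1$ inside each $j$-block and then $j=1,\ldots,n$, as claimed. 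Part (c) is then immediate: Grassmannians are smooth, and iterated bundles of smooth varieties are smooth.
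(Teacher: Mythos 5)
Your proof has the same skeleton as the paper's: induction on $n$, the same forgetful map $\pi$ onto ${\mathcal X}_{\kappa_{\bullet,n}}(d_\bullet)$, the same identification of the fiber with ${\mathcal F}_{\kappa'_{\bullet,\bullet}}(W_\bullet)$, and the same substitution argument for (b) (your arithmetic $\delta_{i,j}+\varepsilon_{i,j}=\kappa_{i,j+1}-\kappa_{i-1,j}$, resp.\ $d_i-\kappa_{i-1,n}$, checks out). Where you genuinely diverge is the local triviality of $\pi$. The paper argues group-theoretically: it regards ${\mathcal X}_{\kappa_{\bullet,n}}(A_\bullet)$ as a $P$-orbit for the parabolic $P$ stabilizing $A_\bullet$, notes $\pi$ is $P$-equivariant, and produces a local section via a unipotent subgroup $U$ with $g\mapsto gW_\bullet$ an open immersion (citing Brion). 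You instead trivialize the tautological flag of subbundles $\mathcal{W}_1\subset\ldots\subset\mathcal{W}_m$ over small opens. Your instinct that the Schubert variety is not $P$-homogeneous is in fact correct in general (e.g.\ for $m=2$, $k_\bullet=(0,1)$, the condition $\dim V_2\cap A_1$ jumps), so your tautological-bundle route sidesteps a real delicacy in the paper's argument; what the paper's route buys, when it applies, is a trivialization by an explicit affine chart.

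One step you should tighten: from ``$\pi$ is a tower of relative Grassmannian bundles, each Zariski-locally trivial'' you conclude ``hence $\pi$ is locally trivial.'' A composition of locally trivial fibrations is not automatically locally trivial, and the definition of iterated bundle used here requires $\pi$ \emph{itself} to be locally trivial with the prescribed typical fiber. The clean repair is entirely within your setup: over a small open $U\subset {\mathcal X}_{\kappa_{\bullet,n}}(d_\bullet)$ the whole flag $\mathcal{W}_1\subset\ldots\subset\mathcal{W}_m$ can be trivialized \emph{as a flag} (the successive quotients $\mathcal{W}_{i}/\mathcal{W}_{i-1}$ are locally free, so one can locally choose compatible frames), and since the fiber $\pi^{-1}(W_\bullet)={\mathcal F}_{\kappa'_{\bullet,\bullet}}(W_\bullet)$ depends functorially on the flag $W_\bullet$, such a trivialization induces $\pi^{-1}(U)\cong U\times{\mathcal F}_{\kappa'_{\bullet,\bullet}}(\kappa_{\bullet,n})$ directly, with no tower needed. (The analogous compositional point also arises in deducing (b) from (a), but the paper elides it in exactly the same way, so I would not hold that against you.)
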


\begin{proof}
First, note that (b) is a consequence of (a) together with the above description
of ${\mathcal X}_{k_\bullet}(A_\bullet)$.
Also, (c) is a consequence of (b), as well as of (a).
Thus, it remains to establish (a). We reason by induction on $n\geq 1$,
with immediate initialization for $n=1$. Assume the property holds until $n-1\geq 1$. Consider the map
\[
\varphi:{\mathcal F}_{\kappa_{\bullet,\bullet}}(A_\bullet) \rightarrow {\mathcal X}_{\kappa_{\bullet,n}}(A_\bullet),
\ (V_{i,j})\mapsto (V_{1,n},\ldots,V_{m,n}).
\]
The fiber of $\varphi$ over $W_\bullet=(W_1,\ldots,W_m)\in {\mathcal X}_{\kappa_{\bullet,n}}(A_\bullet)$
is isomorphic to ${\mathcal F}_{\tilde\kappa_{\bullet,\bullet}}(W_\bullet)$
for $\tilde\kappa_{\bullet,\bullet}=(\kappa_{i,j})_{1\leq i\leq m,\,1\leq j\leq n-1}$.
Therefore, the induction hypothesis applies to the fiber $\varphi^{-1}(W_\bullet)$. 
It therefore remains to show that $\varphi$ is locally trivial.

Let $P\subset GL(A_m)$ be the parabolic subgroup of elements which stabilize the partial flag $A_\bullet$.
Thus, ${\mathcal X}_{\kappa_{\bullet,n}}(A_\bullet)$ is a $P$-orbit of the partial flag variety ${\mathcal F}_{\bullet,n}(A_m)$.
Also, note that $P$ naturally acts on ${\mathcal F}_{\kappa_{\bullet,\bullet}}(A_\bullet)$,
and the map $\varphi$ is $P$-equivariant.
Fix $W_\bullet\in {\mathcal X}_{\kappa_{\bullet,n}}(A_\bullet)$.
By Schubert decomposition, there is a unipotent subgroup $U\subset P$ such that
the map $\rho:U\rightarrow {\mathcal X}_{\kappa_{\bullet,n}}(A_\bullet)$, $g\mapsto gW_\bullet$
is an open immersion (see \cite[\S 1.2]{Brion}).
We get an isomorphism
$U\times \varphi^{-1}(W_\bullet)\stackrel{\sim}{\rightarrow} \varphi^{-1}(\rho(U))$, $(g,(V_{i,j}))\mapsto (gV_{i,j})$.
It results that the map $\varphi$ is trivial over the open set
$r(U)\subset {\mathcal X}_{\kappa_{\bullet,n}}(A_\bullet)$.
\end{proof}

For $i\in \{1,\ldots,m\}$, we write $\kappa_{i,\bullet}=(\kappa_{i,1},\ldots,\kappa_{i,n})$.
From the definition of the variety ${\mathcal F}_{\kappa_{\bullet,\bullet}}(A_\bullet)$
and the fact that it is smooth,
we derive:

\begin{corollary}
\label{corollary-resolution-singularity}
The map ${\mathcal F}_{\kappa_{\bullet,\bullet}}(A_\bullet)\rightarrow 
{\mathcal F}_{\kappa_{m,\bullet}}(A_m)$
$(V_{i,j})\mapsto (V_{m,1},\ldots,V_{m,n})$
is a resolution of singularity of the Schubert variety
\[
\{(V_1\subset\ldots\subset V_n)\in {\mathcal F}_{\kappa_{m,\bullet}}(A_m):\dim A_i\cap V_j\geq \kappa_{i,j}\ \forall i,j\}.
\]
\end{corollary}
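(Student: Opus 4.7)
The plan is to check the four defining properties of a resolution: the map $\pi:(V_{i,j})\mapsto (V_{m,1},\ldots,V_{m,n})$ must have smooth source, be proper, surject onto $X$, and be birational. Smoothness of the source is Proposition~\ref{proposition-iterated-bundle}(c), and properness is automatic since both source and target are projective. The inclusion of the image in $X$ is immediate: for any $(V_{i,j})$ in the source, $V_{i,j}$ sits inside $A_i\cap V_{m,j}$, so $\dim(A_i\cap V_{m,j})\geq \kappa_{i,j}$.

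The heart of the argument is to analyze the preimage of the locus
\[
U=\{(V_j)\in {\mathcal F}_{\kappa_{m,\bullet}}(A_m):\dim(A_i\cap V_j)=\kappa_{i,j}\ \forall\, i,j\}
\]
where all the defining inequalities are equalities. On $U$, the inclusion $V_{i,j}\subset A_i\cap V_{m,j}$ together with equality of dimensions forces $V_{i,j}=A_i\cap V_j$. Since dimensions are locally constant on $U$, the assignment $(V_j)\mapsto (A_i\cap V_j)_{i,j}$ is a morphism $U\to {\mathcal F}_{\kappa_{\bullet,\bullet}}(A_\bullet)$, and it inverts $\pi$ over $U$. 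Hence $\pi$ restricts to an isomorphism $\pi^{-1}(U)\stackrel{\sim}{\rightarrow} U$.

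It remains to show that $U$ is a nonempty open dense subset of $X$. Openness follows from upper semi-continuity of $\dim(A_i\cap\cdot)$. Non-emptiness is obtained by picking a basis of $A_m$ adapted to the partial flag $A_\bullet$ and choosing nested coordinate subspaces $V_j$ with $\dim(A_i\cap V_j)=\kappa_{i,j}$; the monotonicity of $\kappa_{\bullet,\bullet}$ in both indices ensures that such nested index sets can be chosen. For density, one invokes the irreducibility of $X$, which is an instance of a Schubert subvariety of ${\mathcal F}_{\kappa_{m,\bullet}}(A_m)$ cut out by incidence conditions with respect to any complete flag refining $A_\bullet$. Once density holds, properness of $\pi$ makes its image a closed subset of $X$ containing the dense open $U$, so the image is all of $X$. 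Combined with the isomorphism over $U$, this exhibits $\pi$ as a proper birational surjection from a smooth variety onto $X$, as desired.

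The main obstacle is the density of $U$ in $X$, or equivalently the irreducibility of $X$. The containment of the image of $\pi$ in $X$ and the isomorphism over $U$ are essentially formal once written down, but promoting \emph{image contains $U$} to \emph{image equals $X$} requires ruling out spurious irreducible components of $X$. This is a standard fact about Schubert varieties in partial flag varieties, but it is the step that does not follow directly from Proposition~\ref{proposition-iterated-bundle}.
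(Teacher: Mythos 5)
Your overall strategy---smooth source from Proposition \ref{proposition-iterated-bundle}(c), properness from projectivity of source and target, and birationality via an isomorphism over the open locus $U$ where all the incidence inequalities become equalities---is the standard argument and is surely what the paper intends (the paper offers no written proof beyond ``we derive''). The genuine gap is your claim that $U\neq\emptyset$. Monotonicity of $\kappa_{\bullet,\bullet}$ in both indices does \emph{not} guarantee that nested coordinate subspaces $V_j$ with $\dim(A_i\cap V_j)=\kappa_{i,j}$ exactly can be chosen: writing $S_j$ for the index sets, you would need $|S_j\cap\{d_{i-1}+1,\ldots,d_i\}|=\kappa_{i,j}-\kappa_{i-1,j}$, which forces both $\kappa_{i,j}-\kappa_{i-1,j}\leq d_i-d_{i-1}$ and $\kappa_{i,j}-\kappa_{i-1,j}\leq \kappa_{i,j+1}-\kappa_{i-1,j+1}$, and neither inequality follows from Definition \ref{definition-1}. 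These inequalities are in fact necessary for $U\neq\emptyset$ (they hold for any genuine pair of flags), so when they fail your open set $U$ is empty and the density argument collapses. Concretely, take $m=2$, $n=1$, $d_\bullet=(2,4)$, $\kappa_{1,1}=1$, $\kappa_{2,1}=4$: every $V$ with $\dim V=4$ has $\dim(A_1\cap V)=2>1$, so $U=\emptyset$; moreover the source is $\{(V_{1,1},A_2):V_{1,1}\in Grass_1(A_1)\}\cong\mathbb{P}^1$ while the target Schubert variety is the single point $\{A_2\}$, so the map is not even birational and the corollary itself fails as literally stated.

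So the missing step is not a mere omission in your write-up: the statement requires (and the paper implicitly assumes) that $\kappa_{i,j}$ is the \emph{generic} value of $\dim(A_i\cap V_j)$ on $X$, i.e.\ the compatibility conditions $\kappa_{i,j}-\kappa_{i-1,j}\leq d_i-d_{i-1}$ and $\kappa_{i,j}-\kappa_{i-1,j}\leq\kappa_{i,j+1}-\kappa_{i-1,j+1}$ just mentioned (together with their analogues in the $j$-direction). Under these hypotheses your coordinate construction does produce a point of $U$, and the rest of your argument is sound; note also that surjectivity onto $X$ can be obtained directly (for any $(V_j)\in X$ choose $V_{i,j}\subset A_i\cap V_j$ of dimension $\kappa_{i,j}$ compatibly, using that the spaces $A_i\cap V_j$ are nested in both indices), which then yields the irreducibility of $X$ from the irreducibility of the iterated bundle rather than quoting it as an external fact. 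You should either state the extra hypotheses explicitly or first replace $\kappa_{\bullet,\bullet}$ by its ``essential closure'' before defining $U$.
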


\section{Partition of the variety of $x$-stable partial flags}

\label{section-partition}

In this section, we fix a vector space $A$ of finite dimension $d\geq 0$
and a dimension vector $k_\bullet=(k_1,\ldots,k_n)$ with $k_i\in\{0,1,\ldots,d\}$.
We consider the partial flag variety ${\mathcal F}_{k_\bullet}(A)$ which, for simplicity, we denote by ${\mathcal A}$.
We fix a nilpotent endomorphism $x\in\mathrm{End}(A)$ and we study the subvariety
${\mathcal A}_x\subset {\mathcal A}$ formed by the $x$-stable elements,
i.e. the flags $(V_1,\ldots,V_n)\in{\mathcal A}$ such that $x(V_i)\subset V_i$ for all $i$.
Our purpose is to describe ${\mathcal A}_x$ as a union of locally closed subvarieties
which are vector bundles over varieties of the form ${\mathcal F}_{\kappa_{\bullet,\bullet}}(A_\bullet)$.
Let us formulate our result.

Let $m\geq 0$ be the nilpotent order of $x$. Consider the partial flag
\[
\ker x\subset\ldots\subset \ker x^{m-1}\subset \ker x^m=A,
\]
and let $P\subset GL(A)$ be the parabolic subgroup formed by the elements which leave it invariant.
Choose a semisimple element $s\in P$ whose centralizer $Z_G(s)$ is a Levi subgroup of $P$
and such that $sxs^{-1}=qx$ for some $q\in\mathbb{K}^*$.
Thus, the action of $s$ on ${\mathcal A}$ restricts to an action on ${\mathcal A}_x$,
and we denote by ${\mathcal A}_x^s\subset {\mathcal A}_x$ the subset of fixed points for this action.

\begin{theorem}
\label{theorem1}
Let ${\mathcal P}\subset {\mathcal A}$ be a $P$-orbit. \\
(a) We have ${\mathcal A}_x\cap {\mathcal P}\not=\emptyset$ if and only if 
${\mathcal A}^s_x\cap {\mathcal P}\not=\emptyset$. In what follows, suppose
that both properties are satisfied. \\
(b) The subset ${\mathcal A}^s_x\cap {\mathcal P}$ is open and closed in ${\mathcal A}_x^s$,
and it is isomorphic to a variety of the form ${\mathcal F}_{\kappa_{\bullet,\bullet}}(A_\bullet)$. \\
(c) There is an algebraic vector bundle
${\mathcal A}_x\cap {\mathcal P}\rightarrow {\mathcal A}^s_x\cap {\mathcal P}$.
\end{theorem}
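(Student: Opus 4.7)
The plan is to build an explicit ``associated-graded'' map $\mathrm{gr}:\mathcal{A}_x\cap\mathcal{P}\to\mathcal{A}_x^s\cap\mathcal{P}$ that simultaneously yields (a), (b), and (c). The key structural observation is that the hypothesis $Z_G(s)=L_P$ forces $s$ to act as a scalar $\lambda_i$ on each quotient $\ker x^i/\ker x^{i-1}$, and combined with $sxs^{-1}=qx$ these scalars satisfy $\lambda_{i-1}=q\lambda_i$, hence are distinct. Consequently, the $\lambda_i$-eigenspace of $s$ in $A$ is a complement $M_i$ of $\ker x^{i-1}$ in $\ker x^i$, so we get an $s$-stable decomposition $A=M_1\oplus\cdots\oplus M_m$ satisfying $\ker x^i=M_1\oplus\cdots\oplus M_i$ and, crucially, $x(M_i)\subset M_{i-1}$ (since $x$ shifts $A_\lambda\to A_{q\lambda}$). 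Under this splitting, the canonical identification $A\cong\bigoplus_i\ker x^i/\ker x^{i-1}$ intertwines $x$ with its induced graded endomorphism.

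Define $\mathrm{gr}(V_1,\ldots,V_n)=(\tilde V_1,\ldots,\tilde V_n)$ by $\tilde V_j=\bigoplus_i (V_j\cap\ker x^i)/(V_j\cap\ker x^{i-1})$, each summand viewed inside $M_i\cong\ker x^i/\ker x^{i-1}$. Each $\mathrm{gr}^i V_j\subset M_i$ is automatically $s$-stable (since $M_i$ is a single $s$-eigenspace), so $\tilde V_j$ is $s$-stable; the property $x(M_i)\subset M_{i-1}$ combined with $x(V_j)\subset V_j$ shows $x(\tilde V_j)\subset\tilde V_j$; and dimension counts give $\dim(\tilde V_j\cap\ker x^i)=\dim(V_j\cap\ker x^i)=\kappa_{i,j}$, so $\mathrm{gr}(F)\in\mathcal{A}_x^s\cap\mathcal{P}$. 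The nontrivial direction of (a) is thus proved, the converse being tautological. For (b), the intersection-dimension matrix $(\dim(V_j\cap\ker x^i))_{i,j}$ takes finitely many values on $\mathcal{A}_x^s$ and determines the $P$-orbit, so $\mathcal{A}_x^s\cap\mathcal{P}$ is open and closed in $\mathcal{A}_x^s$; the isomorphism with a variety of the form $\mathcal{F}_{\kappa_{\bullet,\bullet}}(A_\bullet)$ comes from sending an $s$-stable, $x$-stable flag to the doubly indexed family of intersections $V_j\cap A_i$ for an appropriate chain $A_\bullet$ built from the $M_i$ (or equivalently the $\ker x^i$), the inverse being $(V_{i,j})\mapsto V_{m,j}$.

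For (c), I parameterize the fiber of $\mathrm{gr}$ over $\tilde F=(\tilde V_j)$ by ``graph'' homomorphisms $\phi_j\in\mathrm{Hom}(\tilde V_j,A/\tilde V_j)$: any $F$ in the fiber has $V_j=\{v+\phi_j(v):v\in\tilde V_j\}$. Three conditions pin down the fiber: (i) the grading condition $\phi_j(\tilde V_j\cap M_i)\subset(\ker x^{i-1}+\tilde V_j)/\tilde V_j$, ensuring $\mathrm{gr}(V_j)=\tilde V_j$; (ii) the intertwining $x\circ\phi_j=\phi_j\circ x$ on $\tilde V_j$, where $x$ on $A/\tilde V_j$ is induced by $x(\tilde V_j)\subset\tilde V_j$, coming from $x(V_j)\subset V_j$; and (iii) the compatibility $\phi_{j+1}|_{\tilde V_j}=\mathrm{pr}\circ\phi_j$, coming from $V_j\subset V_{j+1}$. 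Each condition is linear in $(\phi_1,\ldots,\phi_n)$, so the fiber is a linear subspace of $\bigoplus_j\mathrm{Hom}(\tilde V_j,A/\tilde V_j)$. Local triviality follows by $P$-equivariance of the construction and a Bruhat-type open cover of $\mathcal{A}_x^s\cap\mathcal{P}$, as in the proof of Proposition \ref{proposition-iterated-bundle}.

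The hard part is \emph{(c)}, specifically verifying that the $x$-stability condition on the fiber linearizes cleanly. This crucially relies on $x(M_i)\subset M_{i-1}$, which makes the induced graded $x$ on $A=\bigoplus M_i$ agree with $x$ itself, so that the compatibility $x\circ\phi_j=\phi_j\circ x$ is a genuine linear equation rather than an equation mixed with higher-order deformation terms. A second technical point is identifying in (b) the precise chain $A_\bullet$ and matrix $\kappa_{\bullet,\bullet}$ that match the $s$-stable structure; this is a combinatorial refinement of the $\ker x^i$ filtration by the $s$-eigenspace decomposition, and is where the fixed-point geometry is bridged to the iterated-bundle structure of Proposition \ref{proposition-iterated-bundle}.
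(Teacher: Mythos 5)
Your associated-graded map is precisely the homogenization map $\hat\sigma$ described in the paper's first remark after the proof, and your argument for (a) (homogenize an $x$-stable flag; the result is $x$-stable, $s$-fixed, and stays in the same $P$-orbit because $\dim V_j^{(hom)}\cap\ker x^i=\dim V_j\cap\ker x^i$) is correct and in fact more direct than the paper's, which goes through the combinatorial conditions on $\alpha_{\bullet,\bullet}$. The open-and-closed part of (b) is also fine. However, the isomorphism in (b) is not the map you describe. If $A_\bullet$ is built from the $\ker x^i$ and the map is $(V_j)\mapsto(V_j\cap A_i)$ with inverse $(V_{i,j})\mapsto V_{m,j}$, the map is injective but its image is a \emph{proper} closed subvariety of ${\mathcal F}_{\kappa_{\bullet,\bullet}}(A_\bullet)$: a general point of that variety has $V_{i,j}\subsetneq V_{m,j}\cap A_i$ and $V_{m,j}$ not $x$-stable, so your proposed inverse does not land in ${\mathcal A}_x^s\cap{\mathcal P}$. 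The paper's Proposition \ref{proposition-2}(b) instead takes $A_i=x^{m-i}(\ker x^{m-i+1})$, a chain inside $\ker x$, sends $V_\bullet$ to $\bigl(x^{m-i}(V_j\cap\ker x^{m-i+1})\bigr)_{i,j}$, and reconstructs $V_j=\bigoplus_i\hat x^{m-i}(V_{i,j})$; it is exactly the nesting condition $V_{i,j}\subset V_{i+1,j}$ that encodes $x$-stability of the reconstructed flag. This identification is the substantive content of (b) and cannot be deferred as a combinatorial refinement.

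The more serious gap is in (c). Decompose $A=\tilde V_j\oplus C$ with $C$ a graded complement and write subscripts for components. The exact condition for $x(V_j)\subset V_j$ on the graph $V_j=\{v+\phi_j(v)\}$ is
\[
(x\phi_j v)_C\;=\;\phi_j(xv)\;+\;\phi_j\bigl((x\phi_j v)_{\tilde V_j}\bigr),
\]
and the last term is quadratic in $\phi_j$. It does not vanish in general once $m\geq 4$: for $v$ of degree $i$, condition (i) puts $\phi_j v$ in degrees $\leq i-1$, hence $x\phi_j v$ in degrees $\leq i-2$, and $\phi_j$ of its $\tilde V_j$-component in degrees $\leq i-3$, which is only forced to be zero when $i\leq 3$. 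So the fiber of your map is cut out by quadratic, not linear, equations in $\bigoplus_j\mathrm{Hom}(\tilde V_j,A/\tilde V_j)$, and your claim that it is a linear subspace (let alone that the resulting family is locally trivial with algebraically varying linear structure) is unproved. The paper circumvents this with Lemma \ref{lemma-vector-bundle}: it first shows ${\mathcal P}\to{\mathcal P}^S$ is a vector bundle with fibers $\mathfrak{n}_P/\mathfrak{v}$ in \emph{exponential} coordinates, in which the action of $1_G+x$ on each fiber is the linear map $z\mapsto\mathrm{Ad}(\ell^{-1}(1_G+x)\ell)(z)$, and then identifies ${\mathcal A}_x\cap{\mathcal P}$ as the fixed-point subbundle of an equivariant vector bundle. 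To repair your argument you would either have to pass to these exponential coordinates, or prove directly that your quadratic system has a solution set that is linear (in a consistently varying way) of constant dimension; as written, the linearization step fails.
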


The remainder of the section is devoted to the proof of the theorem.
The proof will include complementary details, for instance we will explicit the matrix 
$\kappa_{\bullet,\bullet}$ and the partial flag $A_\bullet$ in (b)
(cf. Proposition \ref{proposition-2}).

As a preliminary step, let us give another description of the subset ${\mathcal A}^s\subset {\mathcal A}$
of $s$-fixed points.
From the choice of $s$, 
there is a decomposition 
\[
A=E_1\oplus\ldots\oplus E_m
\]
such that $\ker x^i=E_1\oplus\ldots\oplus E_i$ for all $i$, 
with $x(E_i)\subset E_{i-1}$ for $i\geq 2$, and such that $s$ acts on $E_i$
by multiplication by $\lambda_0q^{-i}$ for some $\lambda_0\in\mathbb{K}^*$.
The eigenvalues $\lambda_0q^{-i}$, $i=1,\ldots,m$, are necessarily pairwise distinct. 
Note that in particular, $A=E_1\oplus\ldots\oplus E_m$ is a graduated vector space.
Then, ${\mathcal A}^s$ is the subset of flags $(V_1,\ldots,V_n)\in{\mathcal A}$ which are
homogeneous with respect to this graduation, i.e. such that
$V_j=(E_1\cap V_j)\oplus\ldots\oplus (E_m\cap V_j)$ for all $j$.

Recall that a $P$-orbit in the partial flag variety ${\mathcal A}$ is of the form
${\mathcal P}={\mathcal P}_{\alpha_{\bullet,\bullet}}$ with
\[
{\mathcal P}_{\alpha_{\bullet,\bullet}}=\left\{(V_1,\ldots,V_n)\in{\mathcal A}:V_j\cap\dim \ker x^i=\alpha_{i,j}\ \forall i,j\in\{1,\ldots,m\}{\times}\{1,\ldots,n\}\right\}
\]
where $\alpha_{\bullet,\bullet}$ is a fixed matrix of integers.
Recall that ${\mathcal A}$ stands for the variety of partial flags $(V_1\subset\ldots\subset V_n\subset A)$
with $\dim V_j=k_j$ for all $j$.
In addition, let $l_i=\dim \ker x^i$.
From the definition of ${\mathcal P}={\mathcal P}_{\alpha_{\bullet,\bullet}}$, we get that, if the intersection
${\mathcal A}_x\cap {\mathcal P}$ is nonempty, then $\alpha_{\bullet,\bullet}$ satisfies
\begin{eqnarray}
\label{alpha1}
&
0\leq\alpha_{i,j}\leq \alpha_{i',j'}\mbox{ whenever $i\leq i'$, $j\leq j'$, and }\alpha_{m,j}=k_j\mbox{ for all $j$,}\\
\label{alpha2}
& \alpha_{i',j'}-\alpha_{i'-1,j'}\leq \alpha_{i,j}-\alpha_{i-1,j}\leq \alpha_{1,j}\mbox{ whenever $2\leq i\leq i'$, $j'\leq j$},\\
\label{alpha3}
& \alpha_{i,j}-\alpha_{i-1,j}\leq l_i-l_{i-1}\mbox{ for $2\leq i\leq m$},\ \mbox{and}\ \alpha_{1,j}\leq l_1\mbox{ for all $j$}. 
\end{eqnarray}
Indeed, (\ref{alpha1}) is necessary to guarantee that ${\mathcal P}$
is nonempty, whereas (\ref{alpha2}), (\ref{alpha3}) are implied by the fact that there are injective linear maps
$V_j\cap\ker x^{i'}/V_j\cap\ker x^{i'-1}\hookrightarrow V_j\cap\ker x^i/V_j\cap\ker x^{i-1}$ (induced by $x^{i'-i}$)
and $V_j\cap\ker x^i/V_j\cap\ker x^{i-1}\hookrightarrow\ker x^i/\ker x^{i-1}$
whenever there is an element $V_\bullet\in {\mathcal A}_x\cap {\mathcal P}$.
Conversely, the fact that $\alpha_{\bullet,\bullet}$ satisfies (\ref{alpha1}), (\ref{alpha2}), (\ref{alpha3})
is enough to ensure that the intersection ${\mathcal A}_x^s\cap {\mathcal P}$ is nonempty
(and from this it results point (a) of the theorem).
This is shown in particular by the following proposition, which also establishes point (b) of the theorem.

\begin{proposition}
\label{proposition-2}
Let the matrix $\alpha_{\bullet,\bullet}$ satisfy (\ref{alpha1}), (\ref{alpha2}), (\ref{alpha3}),
and let ${\mathcal P}={\mathcal P}_{\alpha_{\bullet,\bullet}}$.
Let $A_\bullet=(A_1,\ldots,A_m)$ be the partial flag defined by $A_i=x^{m-i}(\ker x^{m-i+1})$ for all $i$,
that is
\[
A_\bullet=(A_1,\ldots,A_m)=\left(x^{m-1}(\ker x^m)\subset \ldots\subset x(\ker x^2)\subset \ker x\right).
\]
Thus, $A_\bullet$ has dimension vector
$d_\bullet=(d_1,\ldots,d_m)$, with $d_i
=\dim \ker x^{m-i+1}-\dim \ker x^{m-i}$.
Let $\kappa_{\bullet,\bullet}=\left(\kappa_{i,j}\right)_{1\leq i\leq m,\,1\leq j\leq n}$
be the matrix with
\[
\kappa_{i,j}=\left\{\begin{array}{ll}
\alpha_{m-i+1,j}-\alpha_{m-i,j} & \mbox{if $i\leq m-1$,} \\
\alpha_{1,j} & \mbox{if $i=m$.}
\end{array}\right.
\]
(a) The intersection ${\mathcal A}_x^s\cap {\mathcal P}$ is open and closed in ${\mathcal A}_x^s$. \\
(b) The map
\[
{\mathcal A}_x^s\cap {\mathcal P}\rightarrow {\mathcal F}_{\kappa_{\bullet,\bullet}}(A_\bullet),\
(V_1,\ldots,V_n)\mapsto \left(\,x^{m-i}(V_j\cap \ker x^{m-i+1})\,\right)_{1\leq i\leq m,\,1\leq j\leq n}
\]
is well-defined and it is an isomorphism of algebraic varieties.
\end{proposition}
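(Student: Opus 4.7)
My plan is to work throughout in the grading $A=E_1\oplus\ldots\oplus E_m$ provided by the choice of $s$. The key preliminary observation is that since $x$ maps $E_r$ injectively into $E_{r-1}$ for $r\geq 2$, the restriction $x^{r-1}|_{E_r}\colon E_r\to A_{m-r+1}$ is a linear isomorphism (after identifying $A_i=x^{m-i}(\ker x^{m-i+1})$ with $i=m-r+1$, using that $\ker x^{r-1}\cap E_r=0$ and that $\ker x^r=E_1\oplus\ldots\oplus E_r$ collapses to $x^{r-1}(E_r)$ under $x^{r-1}$). Any $V_\bullet\in{\mathcal A}^s$ is homogeneous, hence entirely encoded by the pieces $W_{r,j}:=V_j\cap E_r$, and the forward map of the proposition transports each $W_{m-i+1,j}$ onto $V_{i,j}\subset A_i$ via $x^{m-i}$.

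For part (a), I would use the fact that each function $V_\bullet\mapsto \dim(V_j\cap E_r)$ is upper semi-continuous on ${\mathcal A}$, together with the observation that on ${\mathcal A}^s$ these quantities sum to the constant $k_j=\dim V_j$. Consequently each is locally constant on ${\mathcal A}^s$, and so is $\dim V_j\cap\ker x^i=\sum_{r\leq i}\dim(V_j\cap E_r)$. The $P$-orbit conditions $\dim V_j\cap\ker x^i=\alpha_{i,j}$ then cut out an open-and-closed subset of ${\mathcal A}_x^s$.

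For part (b), I would first check well-definedness of the forward map: the inclusion $V_{i,j}\subset V_{i,j'}$ for $j\leq j'$ is immediate from $V_j\subset V_{j'}$; the inclusion $V_{i,j}\subset V_{i+1,j}$ follows from the factorization $x^{m-i}v=x^{m-i-1}(xv)$ applied to $v\in V_j\cap\ker x^{m-i+1}$, using $x$-stability of $V_j$ to place $xv\in V_j\cap\ker x^{m-i}$; and the dimension $\dim V_{i,j}=\kappa_{i,j}$ comes from a rank-nullity computation which identifies the kernel of $x^{m-i}$ on $V_j\cap\ker x^{m-i+1}$ with $V_j\cap\ker x^{m-i}$. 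Then I would construct an inverse by setting $W_{r,j}:=(x^{r-1}|_{E_r})^{-1}(V_{m-r+1,j})$ and $V_j:=\bigoplus_r W_{r,j}$: homogeneity is built in, the chain $V_j\subset V_{j+1}$ reflects the horizontal inclusions of $(V_{i,j})$, $x$-stability reflects the vertical inclusions $V_{i,j}\subset V_{i+1,j}$ after the index change, and the intersection dimensions $\dim V_j\cap\ker x^i=\alpha_{i,j}$ reduce to a telescoping sum built into the definition of $\kappa_{i,j}$. Both maps are visibly morphisms on a single $P$-orbit since all the relevant ranks are constant there.

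The main obstacle I expect is the combinatorial bookkeeping, in particular tracking the index reversal $i\leftrightarrow m-r+1$ and matching the two distinct types of inclusion data (horizontal in $j$ corresponding to flag ordering, vertical in $i$ corresponding to $x$-stability) across the equivalence. Beyond that, the argument is essentially formal linear algebra on the graded space. Part (a) could alternatively be deduced a posteriori from (b) together with the irreducibility of ${\mathcal F}_{\kappa_{\bullet,\bullet}}(A_\bullet)$ granted by Proposition \ref{proposition-iterated-bundle}, but the semi-continuity argument is cleaner and does not rely on the full isomorphism.
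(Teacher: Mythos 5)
Your proposal is correct and follows essentially the same route as the paper: work in the grading $A=E_1\oplus\ldots\oplus E_m$, use the isomorphisms $x^{m-i}\colon E_{m-i+1}\stackrel{\sim}{\to}A_i$ to define the map and its explicit inverse $V_j=\bigoplus_i \hat{x}^{m-i}(V_{i,j})$, and get (a) from upper semi-continuity of $\dim(V_j\cap E_r)$ combined with the constancy of the sum over $r$. The only cosmetic differences are that you compute $\dim V_{i,j}$ by rank--nullity where the paper uses homogeneity directly, and your "locally constant" phrasing of (a) versus the paper's simultaneous open and closed defining conditions.
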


\begin{proof}
(a) We note that
\[
\begin{array}{l}
{\mathcal A}_x^s\cap {\mathcal P} \ =\ \{V_\bullet\in {\mathcal A}_x^s: \dim V_j\cap E_{m-i+1}=\kappa_{i,j}\ \forall i,j\} \\[2mm]
= \ \{V_\bullet\in {\mathcal A}_x^s: \dim V_j\cap E_{m-i+1}\leq \kappa_{i,j}\ \mbox{and}\ \dim V_j\cap \textstyle\!\!\!\bigoplus\limits_{h\not=m-i+1}\!\!\!\!E_h\leq k_j-\kappa_{i,j}\ \forall i,j\} \\[4mm]
= \ \{V_\bullet\in {\mathcal A}_x^s: \dim V_j\cap E_{m-i+1}\geq \kappa_{i,j}\ \mbox{and}\ \dim V_j\cap \textstyle\!\!\!\bigoplus\limits_{h\not=m-i+1}\!\!\!\!E_h\geq k_j-\kappa_{i,j}\ \forall i,j\},
\end{array}
\]
hence ${\mathcal A}_x^s\cap {\mathcal P}$ is open and closed in ${\mathcal A}_x^s$.

\smallskip
(b) We denote by $\varphi$ the map of the statement.
Observe that $x^{m-i}$ restricts to an isomorphism $x^{m-i}:E_{m-i+1}\stackrel{\sim}{\rightarrow}A_i$.
Thus, $\dim x^{m-i}(V_j\cap \ker x^{m-i+1})=\dim x^{m-i}(V_j\cap E_{m-i+1})=\kappa_{i,j}$.
In addition, for $i\leq i'$ and $j\leq j'$, we have
\begin{eqnarray*}
x^{m-i}(V_j\cap \ker x^{m-i+1}) & = & x^{m-i'}(x^{i'-i}(V_{j})\cap x^{i'-i}(\ker x^{m-i+1})) \\
 & \subset & x^{m-i'}(V_{j}\cap \ker x^{m-i'+1}) \\
 & \subset & x^{m-i'}(V_{j'}\cap \ker x^{m-i'+1}).
\end{eqnarray*}
Therefore, $\left(\,x^{m-i}(V_j\cap \ker x^{m-i+1})\,\right)_{i,j}\in {\mathcal F}_{\kappa_{\bullet,\bullet}}(A_\bullet)$,
so that $\varphi$ is well defined. It is clearly algebraic.

Let $\hat{x}^{m-i}:A_i\rightarrow E_{m-i+1}$ denote the inverse of $x^{m-i}$.
For $V_{\bullet,\bullet}=(V_{i,j})\in {\mathcal F}_{\kappa_{\bullet,\bullet}}(A_\bullet)$,
we set 
\[
\psi(V_{\bullet,\bullet})=(W_1,\ldots,W_n),\ \mbox{with }W_j=\textstyle\bigoplus\limits_{i=1}^m\hat{x}^{m-i}(V_{i,j})\mbox{ for all $j$}.
\]
First, we have $\dim W_j=\sum_{i=1}^m\kappa_{i,j}=\alpha_{m,j}=k_j$.
Second, if $j\leq j'$, then we have $\hat{x}^{m-i}(V_{i,j})\subset \hat{x}^{m-i}(V_{i,j'})$ for all $i$,
thus $W_j\subset W_{j'}$.
Hence, $\psi(V_{\bullet,\bullet})\in {\mathcal A}^s$.
Next, $x(V_{m,j})=0$ and, for $i\leq m-1$,
$x(\hat{x}^{m-i}(V_{i,j}))\subset \hat{x}^{m-i-1}(V_{i,j})\subset \hat{x}^{m-i-1}(V_{i+1,j})$,
thus $x(W_j)\subset W_j$ for all $j$, so $\psi(V_{\bullet,\bullet})\in {\mathcal A}^s_x$.
Finally, we see that $\dim W_j\cap \ker x^i=\sum_{h=1}^i\kappa_{h,j}=\alpha_{i,j}$.
Therefore, $\psi(V_{\bullet,\bullet})\in {\mathcal A}_x^s\cap {\mathcal P}$.
So, $\psi$ is well defined from ${\mathcal F}_{\kappa_{\bullet,\bullet}}(A_\bullet)$
to ${\mathcal A}_x^s\cap {\mathcal P}$.

Clearly, $\psi$ is algebraic, and we have $\psi\circ\varphi=\mathrm{id}_{{\mathcal A}_x^s\cap{\mathcal P}}$ and
$\varphi\circ\psi=\mathrm{id}_{{\mathcal F}_{\kappa_{\bullet,\bullet}}(A_\bullet)}$.
Therefore, $\varphi$ is an isomorphism of algebraic varieties.
\end{proof}

To complete the proof of the theorem, it remains to construct a vector bundle
${\mathcal A}_x\cap{\mathcal P}\rightarrow{\mathcal A}_x^s\cap{\mathcal P}$.
To do this, we will use the following general result on parabolic orbits in partial
flag varieties for reductive groups.

\begin{lemma}
\label{lemma-vector-bundle}
Let $G$ be a connected, reductive algebraic group and let $P,Q\subset G$ be parabolic subgroups.
Let $L\subset P$ be a Levi subgroup, and let $S$ be the connected component of the center of $L$.
Let $U_P\subset P$ be the unipotent radical of $P$.
Let ${\mathcal P}\subset G/Q$ be a $P$-orbit, and consider the $S$-fixed point set ${\mathcal P}^S\subset {\mathcal P}$.
Fix $\xi \in {\mathcal P}^S$. \\
(a) The subvariety ${\mathcal P}^S$ is $L$-homogeneous, and $M:=\{\ell\in L:\ell \xi =\xi \}$ is a parabolic subgroup of $L$. 
So ${\mathcal P}^S=L/M$ is an irreducible, smooth, projective variety. \\
(b) The map $P=L\ltimes U_P\rightarrow L/M={\mathcal P}^S$
factorizes through the map $P\rightarrow {\mathcal P}$, $g\mapsto g\xi $ into an algebraic vector bundle
${\mathcal P}\rightarrow{\mathcal P}^S$.
\end{lemma}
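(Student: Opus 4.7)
The plan centers on a choice of cocharacter $\lambda:\mathbb{G}_m\to S$ that contracts $U_P$ to the identity, i.e.\ $\lim_{t\to 0}\lambda(t)u\lambda(t)^{-1}=e$ for every $u\in U_P$; such a $\lambda$ exists because $S$ is the connected center of a Levi factor of the parabolic $P$.

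For part~(a), the $L$-invariance of $\mathcal{P}^S$ is immediate since $S\subset Z(L)$. To prove transitivity, take $\eta\in\mathcal{P}^S$ and write $\eta=p\xi$ with $p=\ell u$ in the Levi decomposition $P=L\ltimes U_P$. Since $\lambda(t)$ commutes with $\ell$ and fixes $\xi$,
\[
\eta=\lambda(t)\eta=\ell\bigl(\lambda(t)u\lambda(t)^{-1}\bigr)\xi\xrightarrow[t\to 0]{}\ell\xi,
\]
so $\eta=\ell\xi$, and $\mathcal{P}^S=L\xi\cong L/M$. To see that $M$ is parabolic in $L$, I would use a standard root-system argument: because $L=Z_G(S)$ and $\mathrm{Stab}_G(\xi)$ is a parabolic $Q'$ of $G$ containing $S$, one may choose a maximal torus $T$ with $S\subset T\subset L\cap Q'$ (replacing $\xi$ by a suitable $L$-translate if necessary). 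Then $M=L\cap Q'\supset T$, and the set of roots $\Phi(M,T)=\Phi(L,T)\cap\Phi(Q',T)$ is the intersection of $\Phi(L,T)$ with a parabolic subset of $\Phi(G,T)$; it therefore contains a positive system of $\Phi(L,T)$ and is closed under addition, so it is parabolic in $\Phi(L,T)$.

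For part~(b), the crucial first step is the semidirect decomposition $N:=\mathrm{Stab}_P(\xi)=M\ltimes N_U$ with $N_U:=N\cap U_P$: for $n=\ell u\in N$, the same contraction gives $\lim_{t\to 0}\lambda(t)n\lambda(t)^{-1}=\ell$, which lies in the closed set $N$, whence $\ell\in N\cap L=M$ and then $u=\ell^{-1}n\in N_U$. Consequently $MU_P$ is a closed subgroup of $P$, $P/MU_P\cong L/M=\mathcal{P}^S$, and the inclusion $N\subset MU_P$ induces a surjection $\pi:\mathcal{P}=P/N\to P/MU_P=\mathcal{P}^S$ sending $\ell u\xi$ to $\ell\xi$. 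Local triviality is then visible from the presentation of $\pi$ as the associated-bundle projection $L\times^M(MU_P/N)\to L/M$ attached to the locally trivial principal $M$-bundle $L\to L/M$.

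The step I expect to be most delicate is upgrading $\pi$ to an algebraic vector bundle. My preferred tool is the Bia\l ynicki-Birula decomposition: $\lambda$ acts on the smooth variety $\mathcal{P}$ with fixed locus $\mathcal{P}^S$, and the same computation as in~(a) identifies the attractor map $x\mapsto\lim_{t\to 0}\lambda(t)x$ with $\pi$. Using the semidirect structure of $N$, the tangent space $T_\xi\mathcal{P}=\mathrm{Lie}(P)/\mathrm{Lie}(N)$ splits as $\mathrm{Lie}(L)/\mathrm{Lie}(M)\oplus\mathrm{Lie}(U_P)/\mathrm{Lie}(N_U)$, which is precisely a zero-weight piece equal to $T_\xi\mathcal{P}^S$ plus a strictly positive-weight piece. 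Hence Bia\l ynicki-Birula identifies $\pi$ with the normal bundle of $\mathcal{P}^S$ in $\mathcal{P}$, an algebraic vector bundle over $\mathcal{P}^S$, completing the proof.
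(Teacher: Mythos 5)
Your treatment of part (a) and of the structure of the stabilizer is essentially the paper's own argument: the paper also fixes a cocharacter $\lambda:\mathbb{K}^*\to S$ with $U_P=\{g:\lim_{t\to 0}\lambda(t)g\lambda(t)^{-1}=1_G\}$, uses the contraction to show that $U_P$ moves no point of ${\mathcal P}^S$ and hence that $L$ acts transitively, gets parabolicity of $M$ from $M=\mathrm{Stab}_G(\xi)\cap Z_G(S)$ (your root-system verification is just a proof of the standard fact the paper quotes), and proves $\mathrm{Stab}_P(\xi)=M\ltimes(U_P\cap\mathrm{Stab}_P(\xi))$ exactly as you do, arriving at the presentation ${\mathcal P}=L\times_M\bigl(U_P/N_U\bigr)\to L/M$.

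The gap is in your last step. The Bia\l ynicki--Birula theorem gives that the attractor map is a Zariski-locally trivial fibration with affine-space fibers (a ``$\mathbb{G}_m$-fibration''), but it does \emph{not} in general identify the attractor with the normal bundle of the fixed locus: the transition functions are only $\mathbb{G}_m$-equivariant, and when the positive weights on the fiber are not all equal (as happens here, since $\lambda$ acts on $\mathrm{Lie}(U_P)$ with several distinct positive weights) such equivariant automorphisms need not be linear. This linearization question is precisely the delicate point, and you cannot outsource it to BB. The lemma genuinely needs the vector bundle structure (the application downstream uses an equivariant vector bundle and Atiyah's fixed-point result), so you must produce a linear structure on the fiber. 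The paper does this by hand, and the fix slots directly into your own associated-bundle presentation: in characteristic zero the exponential gives an $M$-equivariant isomorphism $\mathfrak{n}_P/\mathfrak{v}\stackrel{\sim}{\to}U_P/N_U$ (where $\mathfrak{v}=\mathrm{Lie}(N_U)$ and $M$ acts adjointly on the left, by conjugation on the right), whence ${\mathcal P}\cong L\times_M(\mathfrak{n}_P/\mathfrak{v})$ is the vector bundle on $L/M$ associated to a linear representation of $M$. Replace the BB paragraph with this and your proof is complete.
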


A more detailed version of this result (giving for instance the dimension of the vector bundle)
together with an extension for loop groups
and affine flag varieties, can be found in \cite{Mitchell}.
For the sake of completeness, we give nonetheless the proof.

\begin{proof}
We start with an observation.
Let $\eta \in{\mathcal P}^S$ and $u\in U_P$, then the following points are equivalent:
(i) $u\eta \in{\mathcal P}^S$; (ii) $u\eta =\eta $. 
Indeed, (ii)$\Rightarrow$(i) is immediate on one hand.
To show the converse, fix a one-parameter subgroup $\lambda:\mathbb{K}^*\rightarrow S$ such that
\[
U_P=\{g\in G:\lim_{t\rightarrow 0}\lambda(t)g\lambda(t^{-1})=1_G\}.
\]
(cf. \cite[\S 8.4.5 and 8.4.6(5)]{Springer-book}.
Then, assuming that $u\eta \in{\mathcal P}^S$, we get $\lambda(t)u\lambda(t)^{-1}\eta =u\eta $ for all $t$, 
so passing to the limit as $t\rightarrow 0$ we infer that $u\eta =\eta $.

From this observation, it follows that the action of $L$ on ${\mathcal P}^S$ is transitive.
Indeed, let $\eta \in{\mathcal P}^S$. As ${\mathcal P}$ is a $P$-orbit, there is $g\in P$ such that $\eta =g\xi $.
Write $g=u\ell $ with $u\in U_P$ and $\ell \in L$, then we have $u(\ell \xi )=\eta \in{\mathcal P}^S$ hence $u(\ell \xi )=\ell \xi $.
Thus $\eta =\ell \xi $.
To complete the proof of (a), note that $\{g\in G:g\xi =\xi \}$ is a parabolic subgroup of $G$
containing $S$. Thus $M$, which is the intersection between $\{g\in G:g\xi =\xi \}$ and $L=Z_G(S)$,
is a parabolic subgroup of $L$.

Let $V\subset U_P$ denote the subset of elements which fix $\xi $. We claim that
$Fix_P(\xi ):=\{g\in P:g\xi =\xi \}=M\ltimes V$.
To show this, it is sufficient to show that, for $u\in U_P$, $\ell \in L$ such that
$u\ell \xi =\xi $, we have $u\xi =\xi $ and $\ell \xi =\xi $.
Since $\ell \xi ,u(\ell \xi )\in{\mathcal P}^S$, using the above observation, we infer that $u(\ell \xi )=\ell \xi $,
therefore $\ell \xi =\xi $. So, also, $u\xi =\xi $.

Since $P=L\ltimes U_P$ and $Fix_P(\xi )=M\ltimes V$, we obtain that ${\mathcal P}=P/Fix_P(\xi )=L\times_M(U_P/V)$.
In this fiber product, $M$ acts on $U_P$, $V$ by conjugation.
Let $\mathfrak{n}_P$ and $\mathfrak{v}$ denote the Lie algebras of $U_P$ and $V$ respectively, 
and consider the adjoint action of $M$ on them.
Through the exponential map, there is a $Q$-equivariant isomorphism
$\mathfrak{n}_P/\mathfrak{v}\stackrel{\sim}{\rightarrow} U_P/V$.
Therefore, the projection map ${\mathcal P}=L\times_M(\mathfrak{n}_P/\mathfrak{v})\rightarrow L/M={\mathcal P}^S$
is an algebraic vector bundle.
Whence (b).
\end{proof}

We apply Lemma \ref{lemma-vector-bundle} to our situation, with $G=GL(A)$,
$G/Q={\mathcal F}_{k_\bullet}(A)$, as above $P=\{g\in G:g\ker x^i=\ker x^i\ \forall i\}$,
$L=Z_G(s)$, and in particular we have ${\mathcal P}^s={\mathcal P}^S$.
The lemma yields that $\sigma:{\mathcal P}\rightarrow {\mathcal P}^s$, $(\ell u)\xi\mapsto \ell\xi$,
is an algebraic vector bundle.
In addition, as in the proof of the lemma, 
let $\mathfrak{n}_P$ and be the Lie algebra of $U_P$ and let $\mathfrak{v}=\{z\in \mathfrak{n}_P:\exp(z)\xi=\xi\}$.
Then, the fiber $\sigma^{-1}(\ell \xi)$ is isomorphic to the quotient $\mathfrak{n}_P/\mathfrak{v}$
via the map $z+\mathfrak{v}\mapsto \ell\exp(z)\xi$.

Consequently, the restriction $\sigma:\sigma^{-1}({\mathcal P}\cap{\mathcal A}_x^s)\rightarrow
{\mathcal P}\cap{\mathcal A}_x^s$ is also an algebraic vector bundle.
Observe that $1_G+x\in U_P$, and thus the group $X$ generated by $1_G+x$ is a finite subgroup of $U_P$.
It acts trivially on ${\mathcal P}\cap{\mathcal A}_x^s$ on one hand,
and its natural action on ${\mathcal P}$ leaves $\sigma^{-1}({\mathcal P}\cap{\mathcal A}_x^s)$
stable on the other hand.
The map $\sigma|_{\sigma^{-1}({\mathcal P}\cap{\mathcal A}_x^s)}$ is $X$-equivariant.
In addition, for $\ell\in L$ such that $\ell \xi\in {\mathcal P}\cap{\mathcal A}_x^s$
and for $z\in\mathfrak{n}_P$, we note that
\begin{eqnarray*}
(1_G+x)(\ell\exp(z)\xi) & = & \ell\exp\left(\mathrm{Ad}(\ell^{-1}(1_G+x)\ell)(z)\right)\ell^{-1}(1_G+x)^{-1}\ell\xi \\
 & = & \ell\exp\left(\mathrm{Ad}(\ell^{-1}(1_G+x)\ell)(z)\right)\xi.
\end{eqnarray*}
Thus, the action of $1_G+x$ induces a linear homomorphism 
$z+\mathfrak{v}\mapsto \mathrm{Ad}(\ell^{-1}(1_G+x)\ell)(z)+\mathfrak{v}$
on each fiber.
This shows that $\sigma:\sigma^{-1}({\mathcal P}\cap{\mathcal A}_x^s)\rightarrow
{\mathcal P}\cap{\mathcal A}_x^s$ is a $X$-vector bundle (cf. \cite[\S 1.6]{Atiyah}).
Note that ${\mathcal A}_x\cap{\mathcal P}$ is the $X$-fixed point set of $\sigma^{-1}({\mathcal P}\cap{\mathcal A}_x^s)$.
Applying \cite[\S 1.6]{Atiyah},
we infer that $\sigma$ restricts to a vector bundle ${\mathcal A}_x\rightarrow{\mathcal A}_x^s$.
This completes the proof of Theorem \ref{theorem1}.

\begin{remark}
The vector bundle $\sigma:{\mathcal P}\rightarrow{\mathcal P}^S$ can be interpreted 
as a homogeneization map in our particular setting, as follows.
For each $i$, we have a natural linear isomorphism $E_i\stackrel{\sim}{\rightarrow} \ker u^i/\ker u^{i-1}$,
then collecting these isomorphisms we get a linear isomorphism
$\pi:\bigoplus_{i=1}^m\ker u^i/\ker u^{i-1}\stackrel{\sim}{\rightarrow} \bigoplus_{i=1}^m E_i$.
For a subspace $V\subset A$, we set
\[
\textstyle{V^{(hom)}=\pi\left(\bigoplus_{i=1}^m(\ker u^i\cap V)/(\ker u^{i-1}\cap V)\right).}
\]
Thus, $V^{(hom)}\subset A$ is homogeneous and has the same dimension as $V$.
Hence,
\[
\hat\sigma:{\mathcal A}\rightarrow{\mathcal A}^s,\ (V_1,\ldots,V_n)\mapsto (V_1^{(hom)},\ldots,V_n^{(hom)})
\]
is well defined.
We observe that, if $V\subset A$ is a subspace,
\[
\dim V\cap \ker x^i=\dim (V\cap \ker x^i)^{(hom)}=\dim V^{(hom)}\cap \ker x^i.
\]
This implies that $\hat\sigma$ stabilizes the $P$-orbits of ${\mathcal A}$.
Moreover, if $V$ is stable by $x$, then $V^{(hom)}$ is stable by $x$, hence
$\hat\sigma({\mathcal A}_x)\subset{\mathcal A}_x^s$.
In fact, the vector bundle $\sigma$ coincides with the restriction
of the map $\hat\sigma$ to the orbit ${\mathcal P}$,
and the vector bundle ${\mathcal A}_x\cap {\mathcal P}\rightarrow{\mathcal A}_x^s\cap {\mathcal P}$
given in point (c) of the theorem is obtained as the restriction
of $\hat\sigma$ to ${\mathcal A}_x\cap {\mathcal P}$.
\end{remark}

\begin{remark}
\label{remark2}
Our construction can be related to
a construction by C. de Concini, G. Lusztig, C. Procesi
\cite{DeConcini-Lusztig-Procesi} of a partition of a Springer fiber (of any type)
into vector bundles over smooth, projective varieties.
In the latter construction, the partition is also obtained by taking the
intersections with the orbits of a parabolic subgroup naturally attached to
the nilpotent element. Each intersection is shown to be smooth, and to be a vector bundle
over its fixed-point set for the action of some semisimple element.
Nevertheless, the parabolic subgroup on which this construction relies
is obtained via the Jacobson-Morozov lemma, which leads to a partition
which does not coincide with ours. Our reasoning to get the structure of vector
bundle of the intersections with the parabolic orbits
is also different than in \cite{DeConcini-Lusztig-Procesi}.
In particular, in \cite{DeConcini-Lusztig-Procesi}, 
the structure of vector bundle is obtained by invoking
that the intersections are smooth, which is proved beforehand.
Here, we derive the smoothness of the intersections from their
structure of vector bundle, which we establish first.
\end{remark}

\end{document}